\documentclass[10pts,english]{amsart} %\amsart -> article
\usepackage[utf8]{inputenc}

\usepackage[left=2.5cm,right=2.5cm]{geometry}
\usepackage[english]{babel}
\usepackage{pgf,tikz}
\usetikzlibrary{positioning}% To get more advances positioning options
\usetikzlibrary{arrows}% To get more arrow heads
       
\usepackage{mathtools}
\usepackage{amssymb}

\usepackage{amsthm}
\usepackage{amsmath}
 \usepackage{bbm}
\usepackage{algorithm}

\newtheorem{defdeuxieme}{Definition}[section]
\newtheorem{sixieme}{Proposition (Half mass transportation)}[section]
\newtheorem{deuxieme}{Theorem}[section]
\newtheorem{troisieme}{Lemma}[section]
\newtheorem{quatrieme}{Proposition}[section]
\newtheorem{cinquieme}{Theorem}[section]

\begin{document}
\title{On the isoperimetric inequality of Coulhon and Saloff-Coste}
\author{Bruno Luiz Santos Correia}
\date{\today}
\begin{abstract}
We improve the isoperimetric inequality of Coulhon and Saloff-Coste following a method of mass transport proposed by Gromov.
\end{abstract}
\maketitle

\section{Introduction}

The isoperimetric inequality proved in 1993 by Coulhon and Saloff-Coste is related to the growth of finitely generated groups  \cite[Théorème1, page 295]{coulhon1993isoperimetrie}. 
%Growth of finitely generated groups was introduced by A.S. Schwarz \cite{vsvarc1955volume} and J. Milnor \cite{milnor1968note}. 
%For the history of the problem and its open questions, we refer to R. %Grigorchuk's article  \cite{grigorchuk2011milnor}.
Let $\Gamma$ be a finitely generated group and $S = \{ a_{1}^{\pm 1},... , a_{m}^{\pm 1} \} \subset  \Gamma$ a finite symmetric set of generators $\Gamma$. The length $|| g||$ of an element $g \in \Gamma$ is the minimal integer n such that $g$ can be written as $g=a_{i_{1}}^{\epsilon_{1}}  \cdots a_{i_{n}}^{\epsilon_{n}}$ with $\epsilon_{1},...,\epsilon_{n} \in \{ -1,+1 \}$.

%Let 
%$\Gamma= \langle a_{1},..., a_{m} \rangle$ be a finitely generated group, 
%$S = \{ a_{1}^{\pm 1},... , a_{m}^{\pm 1} \} \subset  \Gamma$
 %such that  $\mathrm{Card}(S)< \infty$, $S=S^{-1}$ and $\langle S \rangle =\Gamma$. The elements of $\Gamma$ are written as 
%$g=a_{i_{1}}^{\pm 1}  \cdots a_{i_{n}}^{\pm 1}$ with  $n$  minimum. 
%We define the length $|| g||$ of $g$
 %as  the minimum number of generators needed  to represent $g$ as a product of $a_{i}^{\pm 1}$. 

%Thus we can define the distance on $\Gamma$  associated  to $S$ as $dist_{\Gamma}(x,y)= dist(x,y)  \coloneqq   || x \cdot y^{-1}||,$ for all $x,y \in \Gamma$.
%Notice that we choose here to work with a right $\Gamma$-invariant metric.

Let $ e \in \Gamma$ be the identity element and $r \in \mathbb{N}$, we denote by $B(e,r)  \coloneqq \{ g \in \Gamma : || g|| \leq r\} $  the ball of center $e$ and radius $r.$ The growth function of $\Gamma$ (relative to $S$) 
 is defined by  
$\gamma (r) \coloneqq  \mathrm{Card}(B(e,r))\text{ for }r \in \mathbb{N}$
and the inverse growth function is defined by 
 $\phi_{S}(v) \coloneqq \min \{r \in \mathbb{N} : \mathrm{Card}(B(e,r))>v \}    
   \text{ for }v \in \mathbb{N}$.
%We define a partial order, noted by $\preceq$, on the growth functions as: $\gamma_{1}(n) \preceq \gamma_{2}(n)$ if there exists $c_{1}>1 $ and  $c_{2}\in \mathbb{N}$ such that $\gamma_{1}(n) \leq c_{1} \cdot \gamma_{2}(c_{2} \cdot n)$ for all $n \in \mathbb{N}.$ 
%We will note $ \gamma_{1}(n) \sim \gamma_{2}(n)$ if $\gamma_{1} \preceq \gamma_{2}$ and $\gamma_{2} \preceq \gamma_{1}$. This is an equivalence relation. The class only depends on the group and not on the choice of $S.$ 

The Coulhon  Saloff-Coste's inequality
 \cite[Theorem 3.2, page 296]{pittet1999amenable} tells us that for an infinite finitely generated  group  $\Gamma$, with $S$ as above and  
 for all $D \subset \Gamma$ finite, 
$
\partial_{C}D\coloneqq 
 \{ x \in D : 
 \exists 
 s
  \in S  :
  x\cdot s \in \Gamma
  \backslash D \} 
  $   
    we have:
  \[
\begin{aligned}  
  \frac{\mathrm{Card} (\partial_{C} D)}{\mathrm{Card}(D)} \geq \frac{1}{4 \cdot \mathrm{Card}(S) \cdot \phi_{S} (2\cdot \mathrm{Card}(D))}.
\end{aligned}
\]

This result can be expressed in terms of a slightly different definition of the boundaries. Following  \cite[page 348]{gromov2007metric} we define the boundary of a
  finite subset $D \subset \Gamma$ by $\partial D \coloneqq 
 \{ a \in \Gamma : dist(a, D)=1 \}$.
%If $\Gamma$ is infinite then the hypothesis  
%$\mathrm{Card}(D)< \frac{\mathrm{Card}(\Gamma )}{2}$ is always verified  since $\mathrm{Card}(D)< \infty$.
It is  straightforward to show that  $|\partial D| \leq |S| \cdot |\partial_{C} D|$. Hence the Coulhon Saloff-Coste's inequality is a consequence of the following inequality:
 \[
\begin{aligned}  
  \frac{\mathrm{Card} (\partial D)}{\mathrm{Card}(D)} \geq \frac{1}{4 \cdot \phi_{S} (2\cdot \mathrm{Card}(D))}.
\end{aligned}
\]

The main part of this  article focus on  improving the last inequality by a factor 2, by following Gromov's idea based on a mass transport method
\cite[pages 346 - 348]{gromov2007metric}.  

Our main result is the following:
\begin{cinquieme}
Let $\Gamma$ be a  non trivial finitely  generated group with generator $S= S^{-1},$ $\mathrm{Card}(S) < \infty$.

For all finite non empty subset $D \subset \Gamma $ such that $\mathrm{Card}(D)< \frac{\mathrm{Card}(\Gamma)}{2}$, we have:

\[
\frac{\mathrm{Card}( \partial  D)}{\mathrm{Card}(D)}
 >
 \frac{1}{2\cdot \phi_{S}(2 \cdot \mathrm{Card}(D))} \: \: .
\]
\end{cinquieme}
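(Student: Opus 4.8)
The plan is to run Gromov's mass–transport scheme in two stages: a \emph{half mass transportation} stage that produces a single translate of $D$ meeting $D$ in less than half of its mass, and a transport stage that charges that lack of overlap to the vertices of $\partial D$.

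First I would set $R \coloneqq \phi_{S}\bigl(2\,\mathrm{Card}(D)\bigr)$. The hypothesis $\mathrm{Card}(D) < \mathrm{Card}(\Gamma)/2$ is exactly what guarantees that some ball has more than $2\,\mathrm{Card}(D)$ elements, so $R$ is a well-defined positive integer and, by minimality in the definition of $\phi_{S}$, $\mathrm{Card}(B(e,R)) > 2\,\mathrm{Card}(D)$. For the first stage I average $\mathrm{Card}(Dg\cap D)$ over $g\in B(e,R)$: writing the intersection as a sum of indicators over $x\in D$ and using that $x\in Dg$ if and only if $g\in D^{-1}x$, one gets
\[
\sum_{g\in B(e,R)}\mathrm{Card}(Dg\cap D)\;=\;\sum_{x\in D}\mathrm{Card}\bigl(B(e,R)\cap D^{-1}x\bigr)\;\le\;\sum_{x\in D}\mathrm{Card}(D)\;=\;\mathrm{Card}(D)^{2}.
\]
Since there are strictly more than $2\,\mathrm{Card}(D)$ summands, the minimum of $\mathrm{Card}(Dg\cap D)$ over $g\in B(e,R)$ is $<\mathrm{Card}(D)/2$; the minimizer $g$ is not the identity (there the value is $\mathrm{Card}(D)$), so $1\le\|g\|\le R$ and $\mathrm{Card}(Dg\setminus D)>\mathrm{Card}(D)/2$.

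For the transport stage, fix a geodesic word $g=s_{1}\cdots s_{k}$ with $s_{i}\in S$ and $k=\|g\|\le R$. To each $y\in Dg\setminus D$, written uniquely as $y=dg$ with $d\in D$, attach the path $z_{0}=y,\,z_{1},\dots,z_{k}=d$ defined by $z_{j}\coloneqq d\,s_{1}\cdots s_{k-j}$; consecutive vertices differ by a generator, and since $dist(dg,d)=\|g^{-1}\|=\|g\|=k$ this is a geodesic from $y\notin D$ to $d\in D$. Hence it has a last vertex $z_{i(y)}$ outside $D$, with $0\le i(y)\le k-1$ and $z_{i(y)+1}\in D$, so $z_{i(y)}\in\partial D$. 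The key point is that for a fixed $p\in\partial D$ the relation $z_{i(y)}=p$ determines $d$ (hence $y$) once the index $i(y)\in\{0,\dots,k-1\}$ is chosen, and two different indices give two different $d$'s because $g=s_{1}\cdots s_{k}$ is geodesic, so no nonempty contiguous subword of it is trivial. Thus at most $k\le R$ of the elements $y$ are routed through a given $p$, whence
\[
\mathrm{Card}(Dg\setminus D)\;=\;\sum_{p\in\partial D}\mathrm{Card}\{\,y\in Dg\setminus D: z_{i(y)}=p\,\}\;\le\;R\cdot\mathrm{Card}(\partial D).
\]
Combining with $\mathrm{Card}(Dg\setminus D)>\mathrm{Card}(D)/2$ gives $\mathrm{Card}(D)/2<R\cdot\mathrm{Card}(\partial D)$, i.e. $\mathrm{Card}(\partial D)/\mathrm{Card}(D)>1/\bigl(2\phi_{S}(2\,\mathrm{Card}(D))\bigr)$, which is the claim.

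The indicator bookkeeping in the first stage and the check that the $z_{j}$ form a geodesic are routine. The step I expect to be delicate — and the one responsible for the factor $2$ over Coulhon--Saloff-Coste — is the count in the transport stage: one must notice that the boundary is crossed among the first $k$ vertices $z_{0},\dots,z_{k-1}$ and never at the endpoint $z_{k}=d$, so that a given boundary vertex serves at most $k\le R$ of the paths rather than $R+1$, and one must use geodesicity of $g$ to keep those potential sources distinct. The auxiliary points to be careful about are choosing the right translate $Dg$ (so that $dist(dg,d)=\|g\|$) and recording that $\mathrm{Card}(D)<\mathrm{Card}(\Gamma)/2$ is precisely what makes $\phi_{S}(2\,\mathrm{Card}(D))$ finite.
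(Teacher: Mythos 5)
Your proof is correct and follows the same two-stage mass-transport scheme as the paper: an averaging argument over the ball $B(e,R)$ producing a translate of $D$ with more than half of its mass outside $D$ (the paper phrases this via Gromov's smoothing kernel $\varphi_{d}$ and the identity $\sum_{y\in D}\mathrm{Card}(B(y,d)\setminus D)=\sum_{x\in B(e,d)}\mathrm{Card}(xD\setminus D)$, you via the direct double count $\sum_{g}\mathrm{Card}(Dg\cap D)\le \mathrm{Card}(D)^{2}$, but these are the same estimate), followed by routing each displaced point along a geodesic to the last vertex outside $D$, which lies in $\partial D$ and serves at most $k\le R$ points, exactly as in the paper's Proposition 3.1 and its fiber-counting lemma. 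The only deviation is notational: you use right translates $Dg$ and the left-invariant word metric where the paper uses left translates $xD$ and the right-invariant metric, a mirror-image convention under which $\partial D$ becomes $DS\setminus D$ rather than $SD\setminus D$; since $D\mapsto D^{-1}$ interchanges the two settings, this is harmless.
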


We note that
if $\Gamma$ is infinite then the hypothesis  
$\mathrm{Card}(D)< \frac{\mathrm{Card}(\Gamma )}{2}$ is always verified  since $\mathrm{Card}(D)< \infty$.

 %\begin{cinquieme}
%(Improvement of Coulhon and Saloff-Coste's inequality.)

%Let $\Gamma$ be a  non trivial finitely  generated group with generator $S= S^{-1},$ $\mathrm{Card}(S) < \infty$.

%For all finite non empty subset $D \subset \Gamma $ such that $\mathrm{Card}(D)< \frac{\mathrm{Card}(\Gamma)}{2}$, we have:
%\[
%\begin{aligned}
%\frac{\mathrm{Card}(\partial_{C} D)}{\mathrm{Card}(D)} > 
%\frac{1}{  2 \cdot \mathrm{Card}(S) \cdot \phi_{S}(2 \cdot \mathrm{Card}(D))} 
%\end{aligned}
%\]
%\end{cinquieme}
The inequality  in the Theorem improves the lower bound
in 
 \cite[Theorem 3.2, page 296]{pittet1999amenable}
by a factor 2, furthermore the choice of  the definition of  the boundary
$\partial D =\{a \in \Gamma : dist (a,D)=1\}$ allows us to obtain  a strict inequality in the previous theorem. However our inequality is not optimal: as an example if we choose $\Gamma= \mathbb{Z}$ and $S={\pm 1}$,  the non strict inequality can be improved by a factor 4.

\section{Acknowledgements}
I want to thank  Christophe Pittet, my Master thesis director, for the discussions on the subject, his availability, his advice and his patience. 
I thank you to Aitor Perez Perez, for  discussions on the subject of  amenability.
I want to thank Grégoire Schneeberger for his carefully reading of the manuscript and his remarks. I want to thank Pierre de la Harpe, for his remarks and clarifications on the subject.
%I want to thank Dominik Francoeur for telling me about the article  \cite{erschler2018growth}. 

\section{ Mass transport according to Gromov in a finitely generated group}
%Let $\Gamma$ be a finitely generated group, i.e.  $\exists
 %S \subset\Gamma$ such that $\langle S \rangle=\Gamma$ and $\mathrm{Card}(S)<\infty$.
 Let $D\subset\Gamma$ be a finite subset, 
we transport D or an element of  $\Gamma$ by left translation. The left translation by  $\gamma \in \Gamma,$  is the map  $\delta \mapsto \gamma\delta$ , 
for any element  $\delta \in D$. Here $\Gamma$ is equipped with the  metric associated to $S$, which is invariant by right multiplication, namely $dist_{\Gamma}(\delta,\gamma \cdot \delta) = ||\delta \cdot (\gamma \cdot  \delta)^{-1}||=||\gamma||$, therefore $\delta$ is moved by a distance $||\gamma||$ 
and as we prove in the  lemma below, the amount of mass which is transported out of D does not exceed  $||\gamma|| \cdot  \mathrm{Card}(\partial D)$.
%In the following article we will note  $dist_{\Gamma}(.,.)=dist(.,.)$.
In the following we consider $d \in \mathbb{R}_{+}^{*}$.% we consider $x\in \Gamma$ such that $||x|| \leq d$.

%Let  $\varphi \coloneqq  \mathbf{1}_{D}$ be the characteristic function of D.

%Dans son article Gromov ne donne pas exactement les raisons de l'introduction de la définition suivante. Je vous ai envoyé avec un résumé synthétique de l'idée générale de Gromov qu'il explique très brièvement.
 \begin{defdeuxieme}
Let $\varphi_{d}$ be the smoothing of $\varphi$  by the smoothing kernel:
\begin{equation}
\text{That is } S(x,y)=
 \left\{
\begin{array}{r c l}
  & \frac{1}{\mathrm{Card}(B(e,d))}  \quad for \quad   dist(x,y) \leq d  \:,   \\
  &0  \qquad \qquad \quad \: for \quad  dist(x,y) > d      \: , 
\end{array}
\right. \text{ for all  }x,y \in  \Gamma.
\end{equation}
We define $\varphi_{d}$ as:
  \[ \varphi_{d}(y)=
  \sum_{x \in \Gamma}^{} S(x,y) \cdot \varphi (x), \text{ for all }
\varphi \text{ probability density function} .\]
\end{defdeuxieme}
We compute $\varphi_{d}$ according to its definition for
  $\varphi \coloneqq  \mathbf{1}_{D}$ the characteristic function of D:
\[
\begin {aligned}
\varphi_{d}(y)
&=
 \sum_{x \in \Gamma}^{} S(x,y) \cdot \mathbf{1}_{D}(x)
 &=
  \sum_{x \in D}^{}S(x,y)
&=
\sum_{x \in D\cap B(y,d)}^{} \frac{1}{\mathrm{Card}(B(e,d))}
 &=
 \frac{\mathrm{Card}( D \cap B(y,d) )}{\mathrm{Card}(B(e,d))} .
\end {aligned}
\]

\begin{deuxieme}
  $($\cite[page 343]{gromov2007metric}$ )$
  Let $\Gamma$ a group finitely generated by $S=S^{-1}, \mathrm{Card}(S)<\infty$.

 For all finite subsets $D \subset \Gamma$, 
 for all $ d \in \mathbb{N}$ and 
 for $\varphi \coloneqq  \mathbf{1}_{D}$
 we have:
\[
\begin{aligned}
&i)  \sum_{y\in D}^{}|\varphi(y) - \varphi_{d}(y)  | 
=
\frac{1}{\mathrm{Card}(B(e,d))}\sum_{x\in B(e,d)}^{} \mathrm{Card}( xD\setminus D) ,\\
& ii) \sum_{y \in  D}^{} \mathrm{Card}(B(y,d) \setminus D )  = \sum_{x \in B(e,d) }\mathrm{Card}(xD \setminus D).
\end{aligned}
\]
\end{deuxieme}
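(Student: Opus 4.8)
The plan is to prove (ii) first, by a double-counting (Fubini) argument, and then to deduce (i) from (ii) using the explicit formula $\varphi_{d}(y)=\mathrm{Card}(D\cap B(y,d))/\mathrm{Card}(B(e,d))$ computed just above the statement. The one structural fact I would isolate at the outset is that, since the word metric is right-invariant, every ball is a right translate of $B(e,d)$: for $y\in\Gamma$,
\[
z\in B(y,d)\iff \mathrm{dist}(z,y)=\|zy^{-1}\|\le d\iff zy^{-1}\in B(e,d)\iff z\in B(e,d)\,y ,
\]
so $B(y,d)=B(e,d)\,y$. In particular right translation by $y$ is a bijection $B(e,d)\to B(y,d)$, whence $\mathrm{Card}(B(y,d))=\mathrm{Card}(B(e,d))$; and, as recalled in the text, left translation by $x$ is a bijection $D\to xD$, whence $\mathrm{Card}(xD)=\mathrm{Card}(D)$.

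For (ii): all the sums in sight are finite, because $D$ and $B(e,d)$ are finite, so I may freely reindex and exchange the order of summation. Writing each $z\in B(y,d)$ as $z=xy$ with $x\in B(e,d)$ (the bijection above), and then counting the pairs $(x,y)$ in two ways,
\[
\sum_{y\in D}\mathrm{Card}(B(y,d)\setminus D)
=\sum_{y\in D}\mathrm{Card}\{x\in B(e,d):xy\notin D\}
=\sum_{x\in B(e,d)}\mathrm{Card}\{y\in D:xy\notin D\} .
\]
For fixed $x$, the bijection $y\mapsto xy$ from $D$ onto $xD$ carries $\{y\in D:xy\notin D\}$ onto $xD\setminus D$, so the inner term equals $\mathrm{Card}(xD\setminus D)$, and (ii) follows.

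For (i): let $y\in D$. Then $\varphi(y)=1$, while $\varphi_{d}(y)=\mathrm{Card}(D\cap B(y,d))/\mathrm{Card}(B(e,d))$ satisfies $0\le\varphi_{d}(y)\le 1$, because $D\cap B(y,d)\subseteq B(y,d)$ and $\mathrm{Card}(B(y,d))=\mathrm{Card}(B(e,d))$. Hence
\[
|\varphi(y)-\varphi_{d}(y)|=1-\varphi_{d}(y)
=\frac{\mathrm{Card}(B(y,d))-\mathrm{Card}(D\cap B(y,d))}{\mathrm{Card}(B(e,d))}
=\frac{\mathrm{Card}(B(y,d)\setminus D)}{\mathrm{Card}(B(e,d))} ,
\]
and summing over $y\in D$ and invoking (ii) gives the right-hand side of (i).

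I do not expect a genuine obstacle here: once (ii) is in place the rest is pure bookkeeping. The only point that really deserves care — and the engine of the statement — is the right-invariance of the word metric, which turns each $B(y,d)$ into the right translate $B(e,d)\,y$ and thereby puts the two operations ``translate $D$ by the elements of $B(e,d)$'' and ``replace each point of $D$ by the $d$-ball around it'' on the same footing; I would state that observation explicitly and lean on it in both parts, taking also mild care that the sign $\varphi(y)-\varphi_d(y)\ge 0$ holds on $D$ so that the absolute values disappear cleanly.
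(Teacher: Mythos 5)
Your proposal is correct and follows essentially the same route as the paper: the paper likewise reduces (i) to (ii) via the explicit formula $\varphi_{d}(y)=\mathrm{Card}(D\cap B(y,d))/\mathrm{Card}(B(e,d))$ and proves (ii) by writing both sides as the double sum $\sum_{y\in D}\sum_{x\in B(e,d)}\mathbf{1}_{D^{c}}(x\cdot y)$ and exchanging the order of summation, which is exactly your double-counting of the pairs $(x,y)$. The only differences are presentational (you prove (ii) first and use set language where the paper uses indicator functions), and your explicit remark that $B(y,d)=B(e,d)\,y$ by right-invariance is a welcome clarification of a step the paper uses implicitly.
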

\begin{proof}

We calculate the following variation:
%\begin{equation}
%\begin{aligned}
%\sum_{y \in D}^{} |\varphi(y)-\varphi_{d}(y)|
%&= 
%\sum_{y \in D}^{} \frac{|\mathrm{Card}(B(e,d))-\mathrm{Card}(D\cap B(y,d))|}{\mathrm{Card}(B(e,d))} \\
%&=
%\sum_{y \in D}^{} 
%\frac{\mathrm{Card}(B(y,d)\backslash  [ D\cap B(y,d)])}{\mathrm{Card}(B(e,d))} \\
%&= 
%\sum_{y \in D}^{} \frac{\mathrm{Card} (B(y,d) \backslash D)}{\mathrm{Card}(B(e,d))}    .
%\end{aligned}
%\end{equation}
\begin{align}
\sum_{y \in D}^{} |\varphi(y)-\varphi_{d}(y)|
&= 
\sum_{y \in D}^{} \frac{|\mathrm{Card}(B(e,d))-\mathrm{Card}(D\cap B(y,d))|}{\mathrm{Card}(B(e,d))}  \nonumber \\
&=
\sum_{y \in D}^{} 
\frac{\mathrm{Card}(B(y,d)     \setminus  [ D\cap B(y,d)])}{\mathrm{Card}(B(e,d))} \nonumber \\
&= 
\sum_{y \in D}^{} \frac{\mathrm{Card} (B(y,d) \setminus D)}{\mathrm{Card}(B(e,d))}  \nonumber \\
&=
\frac{1}{\mathrm{Card}(B(e,d))}\sum_{y \in D}^{}\mathrm{Card}(B(y,d)\smallsetminus D)  .
\end{align}

Due to the formula (2) it is sufficient to prove $ii)$:
%en fait on a $\Gamma \backslash D$ avec backslash , 
%$\Gamma \setminus D$ avec setminus et $\Gamma \smallsetminus %D$ avec smallsetminus.

\[
\sum_{y \in D}^{} \mathrm{Card}(B(y,d) \smallsetminus D) 
 =
\sum_{x\in B(e,d)}^{} \mathrm{Card}( xD\smallsetminus D) \: \: .
\]

We notice that for all  $y\in D$ , 

\begin{equation}
\begin{aligned}
\mathrm{Card}(B(y,d) \smallsetminus D) 
&=
 \mathrm{Card}( \{ z \in \Gamma : dist(y,z) \leq d   \} \smallsetminus D )\\
 &=
 \mathrm{Card}( \{ z \in  \Gamma \setminus D : dist(y,z) \leq d\}  )\\
 &= 
 \sum_{x \in B(e,d)}^{} \mathbf{1}_{\Gamma \smallsetminus D}(x \cdot y)\\
 &=
 \sum_{x \in B(e,d)}^{} \mathbf{1}_{D^{c}}(x \cdot y)  \: \: .
\end{aligned}
 \end{equation}
 
 Furthermore for  $x\in \Gamma $ we get: 

 \begin{equation}
 \begin{aligned}
 \mathrm{Card}(xD \setminus D)
&=
  \mathrm{Card}( \{ x \cdot y : y \in D \} \setminus \ D) \\
&=
\mathrm{Card}(  \{ y \in D : x \cdot y \in \Gamma \setminus D \}  )\\
&=
\sum_{y \in D}^{} \mathbf{1}_{\Gamma \setminus D}(x \cdot y)\\
&=
\sum_{y \in D}^{} \mathbf{1}_{ D^{c}}(x \cdot y) \: \: .
\end{aligned} 
\end{equation}

Thus:

\[
\begin{aligned}
\sum_{y \in D}^{} \mathrm{Card}(B(y,d) \setminus D)  
\overset{(3)}{=}
\sum_{y \in D}^{} \sum_{x\in B(e,d)}^{} \mathbf{1}_{D^{c}}(x \cdot y)
=
\sum_{x \in B(e,d) }^{} \sum_{y\in D}^{} \mathbf{1}_{D^{c}}(x \cdot y)
\overset{(4)}{=}
\sum_{x\in B(e,d)}^{} \mathrm{Card}( xD\setminus D).
\end{aligned}
\]
\end{proof}

%In the following we will deduce that there exists $x 
%\in
%B(e,d)$ such that
 %$\mathrm{Card}(xD \setminus D)>\frac{1}{2} \cdot \mathrm{Card}(D)$, with $D$ a non empty subset of $\Gamma$ and $\Gamma$ non trivial.
 
%We consider $D$  such that  $\mathrm{Card}(D)<\frac{\mathrm{Card}(\Gamma)}{2}$, then there exists $d \in 
%\mathbb{N}$ such that $B(e,d) \subset \Gamma$ is the %smallest ball such that 
% $\mathrm{Card}(B(e,d)) > 2 \cdot \mathrm{Card}(D)$.

% This implies using formula (2):
 
\begin{sixieme}
Le $\Gamma$ be a finitely generated group and $S$ a finite symetric set as above. Let $D \subset \Gamma$ be non empty and let $\Gamma$ be non trivial. 

Assume $ \mathrm{Card}(D)<
 \frac{ \mathrm{Card}(\Gamma) }{2}$. 
 Let $ d\in \mathbb{N} $ be minimal such that 
 $ \mathrm{Card}(B(e,d))   >2 \cdot \mathrm{Card}(D)$.
 
 Then there exists $x \in B(e,d)$ such that  
 $\mathrm{Card}(xD \setminus D)  > \frac {\mathrm{Card}(D)}{2}$.
\end{sixieme}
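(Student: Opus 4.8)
The plan is to prove the statement by an averaging (pigeonhole) argument over the ball $B(e,d)$, taking as the main input the double counting identity $ii)$ of the preceding theorem. The strategy is to bound the total transported mass $\sum_{x\in B(e,d)}\mathrm{Card}(xD\setminus D)$ from below and then observe that the largest summand is at least the average. The subtle point, discussed at the end, is that the lower bound must be sharp enough that, after dividing by $\mathrm{Card}(B(e,d))$, one is still left with strictly more than $\mathrm{Card}(D)/2$.

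First I would record that $d$ is well defined: since $S$ is a finite symmetric generating set, the balls $B(e,r)$ exhaust $\Gamma$, so $\mathrm{Card}(D)<\mathrm{Card}(\Gamma)/2$ forces $\mathrm{Card}(B(e,r))>2\,\mathrm{Card}(D)$ for $r$ large, and such a $d$ exists; moreover $\mathrm{Card}(D)\ge 1$ since $D\neq\emptyset$, hence also $d\ge 1$ because $\mathrm{Card}(B(e,0))=1\le 2\,\mathrm{Card}(D)$. Next, for each fixed $y\in D$, right translation by $y$ maps $B(e,d)$ bijectively onto $B(y,d):=\{z\in\Gamma:dist(y,z)\le d\}$ by right invariance of the metric, so $\mathrm{Card}(B(y,d))=\mathrm{Card}(B(e,d))$; and since $B(y,d)\cap D\subseteq D$ we get $\mathrm{Card}(B(y,d)\setminus D)\ge\mathrm{Card}(B(e,d))-\mathrm{Card}(D)$. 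Summing over the $\mathrm{Card}(D)$ elements $y\in D$ and invoking part $ii)$ of the preceding theorem,
\[
\sum_{x\in B(e,d)}\mathrm{Card}(xD\setminus D)\;=\;\sum_{y\in D}\mathrm{Card}\bigl(B(y,d)\setminus D\bigr)\;\ge\;\mathrm{Card}(D)\,\bigl(\mathrm{Card}(B(e,d))-\mathrm{Card}(D)\bigr).
\]

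To conclude I would divide by $\mathrm{Card}(B(e,d))$: the left-hand side is a sum of $\mathrm{Card}(B(e,d))$ non-negative integers, so at least one index $x\in B(e,d)$ satisfies
\[
\mathrm{Card}(xD\setminus D)\;\ge\;\mathrm{Card}(D)\left(1-\frac{\mathrm{Card}(D)}{\mathrm{Card}(B(e,d))}\right).
\]
Since $\mathrm{Card}(B(e,d))>2\,\mathrm{Card}(D)$ and $\mathrm{Card}(D)\ge 1$, the factor in parentheses is strictly larger than $1/2$, whence $\mathrm{Card}(xD\setminus D)>\mathrm{Card}(D)/2$ for this $x$, which is the assertion. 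Thus the hypotheses are used precisely: $\Gamma$ non trivial and $\mathrm{Card}(D)<\mathrm{Card}(\Gamma)/2$ serve only to guarantee that such a $d$ exists, while the defining inequality $\mathrm{Card}(B(e,d))>2\,\mathrm{Card}(D)$ is what makes the final step strict.

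The step needing genuine care — and the reason a cruder estimate fails — is the per-$y$ bound. Replacing it by the immediate $\mathrm{Card}(B(y,d)\setminus D)>\mathrm{Card}(D)$ (which follows at once from $\mathrm{Card}(B(y,d))>2\,\mathrm{Card}(D)$) would only give $\sum_{x}\mathrm{Card}(xD\setminus D)>\mathrm{Card}(D)^2$, and after division by $\mathrm{Card}(B(e,d))$ this is too weak, since minimality of $d$ yields only $\mathrm{Card}(B(e,d-1))\le 2\,\mathrm{Card}(D)$ and $\mathrm{Card}(B(e,d))$ can be far larger than $2\,\mathrm{Card}(D)$. Keeping the whole $\mathrm{Card}(B(e,d))$ inside each summand — that is, writing $\mathrm{Card}(B(y,d)\setminus D)\ge\mathrm{Card}(B(e,d))-\mathrm{Card}(D)$ rather than $\ge\mathrm{Card}(D)$ — is exactly what makes the division by $\mathrm{Card}(B(e,d))$ pay for itself. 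Everything else, namely the bijection $B(e,d)\to B(y,d)$ and the bookkeeping of strict versus non-strict inequalities, is routine.
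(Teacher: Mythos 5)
Your proof is correct and follows essentially the same route as the paper: both rest on the identity $\sum_{y\in D}\mathrm{Card}(B(y,d)\setminus D)=\sum_{x\in B(e,d)}\mathrm{Card}(xD\setminus D)$ from the preceding theorem, the pointwise bound $\mathrm{Card}(B(y,d)\setminus D)\ge \mathrm{Card}(B(e,d))-\mathrm{Card}(D)>\tfrac12\,\mathrm{Card}(B(e,d))$, and an averaging step over $B(e,d)$. The only difference is cosmetic: the paper phrases the pointwise bound as $\varphi_d(y)<\tfrac12$ for the smoothed indicator $\varphi_d$ and invokes part $i)$ of the lemma, while you work directly with cardinalities and part $ii)$.
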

\begin{proof}

 Let $\varphi = \mathbf{1}_{D}$. We have:
\begin{equation}
\sum_{y\in D}^{}|  \underbrace{ \varphi(y)}_{=1} - 
\underbrace{ \varphi_{d}(y)}_{<\frac{1}{2}} | > \frac{1}{2}\cdot \mathrm{Card}(D).
\end{equation}

Let $f: \Gamma \rightarrow \mathbb{R}_{+}, \: f(x)= \mathrm{Card}(xD \setminus D).$ We have:

\[
\begin{aligned}
\frac{1}{\mathrm{Card}(B(e,d))} \cdot \sum_{y \in B(e,d)} f(y) 
\overset{ii) \text{Lemma above}}{=}
\frac{1}{\mathrm{Card}(B(e,d))} \cdot \sum_{y \in D} \mathrm{Card}(B(y,d) \setminus D).
\end{aligned}
\]

Since $\mu (y)=  \left\{
\begin{array}{r c l}
  & \frac{1}{\mathrm{Card}(B(e,d))}  \quad if \quad   y \in B(e,d)  \:,   \\
  &0   \quad  if  \quad y \notin B(e,d),     
\end{array}
\right.$
 defines a measure of probability on $\Gamma$ and since $f$ is positive we have:\\
 \[
 \sum_{x \in \Gamma} f(x)\cdot \mu(x) 
 =
 \sum_{x \in B(e,d)}f(x)\cdot \mu(x)
 \leq
 \max_{x\in B(e,d)} f(x)\cdot \sum_{x\in B(e,d)} \mu (x)
 =
 \max_{x \in B(e,d)} f(x).
 \]
 
 Which implies:
\begin{equation}
\begin{aligned}
\exists x \in B(e,d): f(x) \geq \frac{1}{\mathrm{Card}(B(e,d))}\cdot \sum_{y \in D}^{}\mathrm{Card}(B(y,d) \setminus D) .
\end{aligned}
\end{equation}

 By (5), (6) and $(i)$ Lemma3.1 we obtain that  :
 \begin{equation}
 \mathrm{Card}(xD \setminus D)
  >\frac{1}{2}\cdot \mathrm{Card}(D)  \text{, for a } x \in B(e,d).
 \end{equation}
 This ends the proof of the Proposition.
\end{proof}

Now we want to show that:
\[
d \cdot \mathrm{Card}(\partial D) 
 \geq \mathrm{Card}(xD \setminus D), \text{ for }  ||x|| \leq d.
\]
\begin{quatrieme}

Let  $D \subset \Gamma$ be  a  finite subset.

Let $\: \gamma_{0} \in \Gamma \: such \:that \:
||\gamma_{0} ||_{S} \leq d.$

 Let $ \partial D=\{\gamma \in \Gamma: d(\gamma, D)=1\}.$
 
Then  $ \mathrm{Card}(\gamma_{0} D \setminus D)
  \leq d \cdot \mathrm{Card}( \partial D ).$
  \end{quatrieme}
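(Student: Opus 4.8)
The plan is to prove the statement by induction on $d = \lceil ||\gamma_0||_S \rceil$, or more precisely on the word length $||\gamma_0||$, reducing the displacement one generator at a time. Write $\gamma_0 = s_d \cdots s_1$ with each $s_i \in S$ and $||\gamma_0|| = d$ (if $||\gamma_0|| < d$ the bound only gets easier). Set $\gamma_k = s_k \cdots s_1$ for $0 \le k \le d$, so $\gamma_0^{\text{here}} = e$ and $\gamma_d = \gamma_0$; I will telescope $\mathrm{Card}(\gamma_d D \setminus D)$ along this path.

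The key estimate is the one-step bound: for any finite $E \subset \Gamma$ and any single generator $s \in S$,
\[
\mathrm{Card}(sE \setminus E) \le \mathrm{Card}(\partial E).
\]
Indeed, if $x \in sE \setminus E$ then $x = s e'$ for some $e' \in E$ with $x \notin E$, so $x$ has a neighbour (namely $s^{-1}x = e'$, and $s^{-1}\in S$ since $S = S^{-1}$) in $E$ while $x \notin E$; hence $x \in \partial E$. The map is injective on the target side trivially since we are just including $sE\setminus E$ into $\partial E$. This gives $\mathrm{Card}(sE\setminus E)\le \mathrm{Card}(\partial E)$. Applying this with $E = \gamma_{k} D$ and $s = s_{k+1}$, and using that left translation is a bijection so $\mathrm{Card}(\partial(\gamma_k D)) = \mathrm{Card}(\gamma_k \partial D) = \mathrm{Card}(\partial D)$, we get $\mathrm{Card}(\gamma_{k+1}D \setminus \gamma_k D) \le \mathrm{Card}(\partial D)$ for each $k$.

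Next I would assemble the telescoping bound. The set $\gamma_d D \setminus D$ is contained in $\bigcup_{k=0}^{d-1}(\gamma_{k+1}D \setminus \gamma_k D)$: if $x \in \gamma_d D$ but $x \notin D = \gamma_0 D$, then looking at the finite sequence of statements "$x \in \gamma_k D$" for $k = 0, \dots, d$, the first is false and the last is true, so there is some $k$ where it switches from false to true, i.e. $x \in \gamma_{k+1}D \setminus \gamma_k D$. Therefore
\[
\mathrm{Card}(\gamma_d D \setminus D) \le \sum_{k=0}^{d-1} \mathrm{Card}(\gamma_{k+1}D \setminus \gamma_k D) \le \sum_{k=0}^{d-1}\mathrm{Card}(\partial D) = d \cdot \mathrm{Card}(\partial D),
\]
which is the claim.

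I expect the main subtlety to be bookkeeping the left-translation conventions consistently: the path $\gamma_{k+1} = s_{k+1}\gamma_k$ must match the left-translation action used elsewhere in the paper, and one must be careful that $\partial(\gamma E) = \gamma\,\partial E$ — this uses right-invariance of the metric, exactly as invoked in the discussion preceding Definition~3.1. The one-step lemma $\mathrm{Card}(sE\setminus E)\le\mathrm{Card}(\partial E)$ is where $S = S^{-1}$ is used and is the only place any real content enters; everything else is the union/telescoping argument. An alternative to induction is to observe directly that every $x \in \gamma_0 D \setminus D$ lies within distance $d$ of $D$ but outside $D$, hence a geodesic from $x$ to $D$ crosses $\partial D$, and then bound the number of such $x$ mapping to a given boundary point — but the telescoping version above is cleaner and avoids a counting multiplicity argument.
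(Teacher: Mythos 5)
Your one-step bound $\mathrm{Card}(sE\setminus E)\le\mathrm{Card}(\partial E)$ and the telescoping decomposition of $\gamma_0 D\setminus D$ are both fine, but the step that glues them together --- the claim that $\mathrm{Card}(\partial(\gamma_k D))=\mathrm{Card}(\gamma_k\,\partial D)=\mathrm{Card}(\partial D)$ --- is false, and it is precisely the right-invariance you invoke that rules it out. The metric in the paper is $dist(x,y)=||xy^{-1}||$, so $\partial E=(S\cdot E)\setminus E$; right translation is an isometry, but left translation is not, and $\partial(\gamma E)=S\gamma E\setminus\gamma E$ is in general neither equal to $\gamma(SE\setminus E)$ nor of the same cardinality. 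Concretely, in the free group on $a,b$ with $S=\{a^{\pm1},b^{\pm1}\}$ and $E=\{e,a\}$ one has $\mathrm{Card}(\partial E)=6$ while $\mathrm{Card}(\partial(bE))=8$. Since your chain only yields $\mathrm{Card}(\gamma_{k+1}D\setminus\gamma_kD)\le\mathrm{Card}(\partial(\gamma_kD))$, and the right-hand side can exceed $\mathrm{Card}(\partial D)$, the sum does not close to $d\cdot\mathrm{Card}(\partial D)$. Put differently, $\mathrm{Card}(\gamma_{k+1}D\setminus\gamma_kD)=\mathrm{Card}(g_kD\setminus D)$ with $g_k=\gamma_k^{-1}s_{k+1}\gamma_k$ a conjugate of a generator whose word length can be as large as $2k+1$, so the telescoping does not actually reduce the problem to single-generator displacements. (Your argument is correct when the word metric is also left-invariant, e.g. for abelian $\Gamma$, or if the proposition concerned $D\gamma_0$ rather than $\gamma_0 D$.)

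The alternative you sketch and set aside in your last sentence is the paper's actual proof, and the ``counting multiplicity argument'' you hoped to avoid is exactly what replaces the false isometry claim. For $x=\gamma_0\omega_x\in\gamma_0 D\setminus D$, the path $\omega_x,\,s_1\omega_x,\,\dots,\,s_k\cdots s_1\omega_x=x$ has steps of length $1$ by right-invariance, starts in $D$ and ends outside $D$, so it meets $\partial D$ --- the boundary of the \emph{original} $D$, which is the whole point. The paper sends $x$ to the last such crossing $f(x)=s_{M_x}\cdots s_1\omega_x\in\partial D$ and observes that an element $y$ of the fiber over $z\in\partial D$ is determined by the index $M_y\in\{1,\dots,k\}$, since $\omega_y=(s_{M_y}\cdots s_1)^{-1}z$; hence each fiber has at most $k\le d$ elements and $\mathrm{Card}(\gamma_0D\setminus D)\le d\cdot\mathrm{Card}(\partial D)$. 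I recommend you carry out that version rather than the telescoping one.
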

  \begin{proof}
First chosse $s_{1},...,s_{k} \in S$ such that 
$\gamma_{0}
= 
s_{k}
 \cdots
  s_{1}$ with
 $k=|| \gamma_{0}||_{S}$. 
  
 Let $ x \in \gamma_{0}D \setminus D$.
 Then  $x=\gamma_{0}\omega_{x}$ with 
 $\omega_{x} \in D$, therefore $\gamma_{0}^{-1} \cdot x= \omega_{x} \in D.$
 
%Let $\gamma_{0}=s_{k}\cdot \cdot \cdot s_{1},$ where $k \in \mathbb{N}^{*}$  such that $k\leq d,$ and $
%s_{i} \in S$ (symmetric) a minimum fixed writing of
 %$\gamma_{0} .$
 
We note $M_{x}= {\underset{1 \leq n \leq k}\max } \{ n: s_{n} \cdot s_{n-1}\cdot \cdot \cdot s_{1}\cdot \omega_{x} \in \partial D \}$.

We define  
\[  
  \begin{aligned}
f:\gamma_{0} D \setminus D & \longrightarrow \partial D\\
 x & \mapsto s_{M_{x}} \cdot s_{M_{x}-1} \cdot \cdot \cdot s_{1}\cdot \omega_{x}, \\
 \end{aligned}
 \]
 where geometrically $f(x)$ is the first point of  $\partial D$
that intersect the geodesic path from $x$ to $\omega_{x}$ defined by the choice of $s_{1},...,s_{k}$. (Remenber that our word metric is right invariant). 
We illustrate geometrically this definition  in Figure 1 and Figure 2.

\begin{figure}[h]
\begin{center}

\includegraphics[trim={8.6cm 5.8cm 10.5cm 4.6cm},clip=true,scale=0.46]{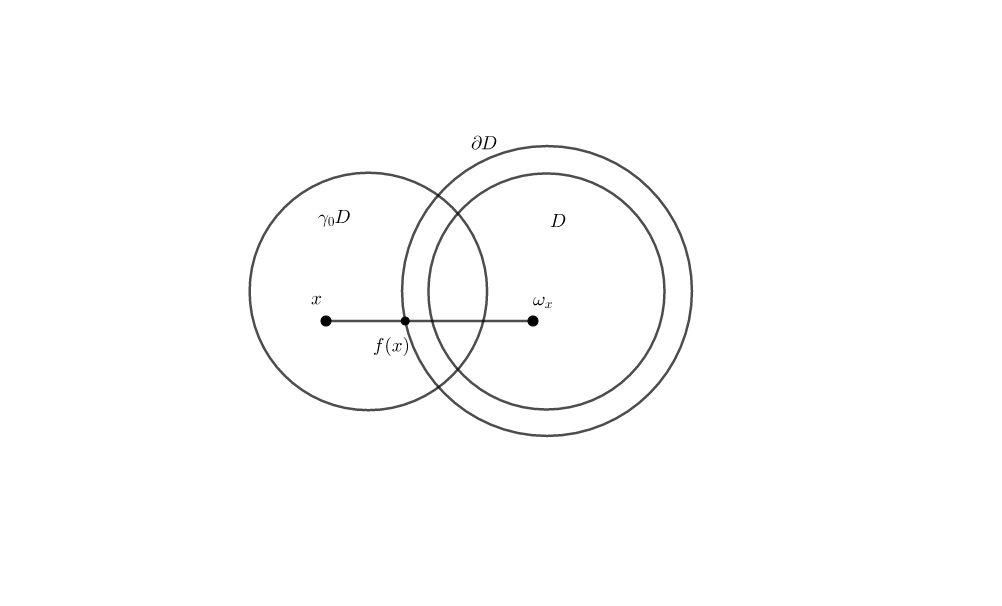}
\end{center}
\caption{Geometric representation of the application $f$.}
\end{figure}

\begin{figure}[h]
\begin{center}
\includegraphics[trim={6cm 8cm 9cm 4.8cm},clip=true,scale=0.46]{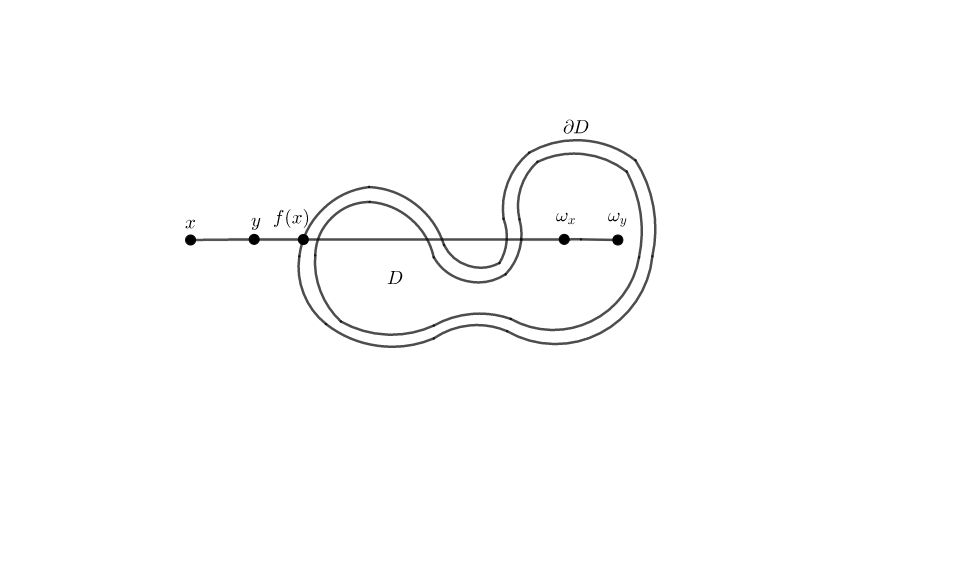}
\end{center}
\caption{Geometric representation allowing to see  $M_{x}$.}
\end{figure}

Let us first demonstrate the following lemma.

 \begin{troisieme}
 
 Let $\: z\in \partial D$. 
% and $ \gamma_{0}=s_{k} \cdot s_{k-1}\cdot \cdot \cdot  s_{1} \:\text{ the minimum fixed writing of }\gamma_{0}, \: ||\gamma_{0}||=k.$
 
Then $\: \mathrm{Card}( f^{-1}(z)) \leq d.$
\end{troisieme} 
 \begin{proof}
 
If $f^{-1}(z)$ is non empty, we fix  $x \in f^{-1}(z)$  such  that  $ M_{x}$ is maximal among the points of $f^{-1}(z) .$ 
 Let $y \in f^{-1}(z).$ 

We compare the writing of  $f(x)$ with the one of $f(y)$

$x=\gamma_{0}\cdot  \omega_{x}$

$y=\gamma_{0}\cdot  \omega_{y},$ with $ \omega_{x}, \omega_{y} \in D$

\[
\begin {aligned}
&f(x)=z=f(y) \Leftrightarrow
s_{M_{x}} \cdot s_{M_{x}-1} \cdots s_{1} \omega_{x}=z=s_{M_{y}} \cdot s_{M{y}-1}\cdots s_{1}\cdot \omega_{y}\\
& \text{ with }  M_{x} \geq M_{y}\\
&  \overbrace{s_{M_{x}} \cdot s_{M_{x}-1} \cdots}^{h} \underbrace{s_{M_{y}} \cdots s_{1}}_{g}\cdot \omega_{x}=  
\underbrace{s_{M_{y}}\cdots s_{1}}_{g}   \cdot \omega_{y}
\Leftrightarrow
hg\omega_{x}=g\omega_{y}\\
& \Leftrightarrow g^{-1}hg\omega_{x}=\omega_{y}.
\end{aligned}
\]

Thus $y=\gamma_{0}\omega_{y}=\gamma_{0}g^{-1}hg\omega_{x}$
and $g^{-1}hg$ is completely determined by the value of $M_{y}$. We have $1 \leq M_{y} \leq M_{x} \leq k \leq d$. Thus we have $d$ possibilities for  $y$.

This completes the proof of the lemma.
 \end{proof}
 
We consider the application  $f:\gamma_{0} D \backslash D \rightarrow  \partial D$. According to the previous result each point of $\partial D$ has at most $d$ preimages. Therefore 
\begin{equation}
\begin{aligned}
 \mathrm{Card}(\gamma_{0} D \backslash D ) \leq d \cdot \mathrm{Card}(\partial D) .
\end{aligned}
 \end{equation}

 %Thus by  (8) and (9): 
 %\begin{equation}
%\begin{aligned} 
 %d \cdot \mathrm{Card}(\partial D)\geq \mathrm{Card}(\gamma_{0} D \backslash D)> 
 %\frac{1}{2} \cdot \mathrm{Card}(D) \: \: .
%\end{aligned}
 %\end{equation}

This ends  the  proof of the proposition.
\end{proof}

As mentioned in the introduction the following theorem is our main result.

\begin{deuxieme}
Let $\Gamma$ be a non trivial group finitely generated by $S= S^{-1},$ $\mathrm{Card}(S) < \infty$.

For all finite non empty subset $D \subset \Gamma $ such that $\mathrm{Card}(D)< \frac{\mathrm{Card}(\Gamma)}{2}$, we have:

\[
\frac{\mathrm{Card}( \partial  D)}{\mathrm{Card}(D)}
 >
 \frac{1}{2\cdot \phi_{S}(2 \cdot \mathrm{Card}(D))} \: \: .
\]

\end{deuxieme}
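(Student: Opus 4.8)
The plan is to combine the two propositions of this section after the elementary observation that the radius $d$ occurring in the half mass transportation statement is exactly $\phi_{S}(2\cdot\mathrm{Card}(D))$. Indeed, by definition $\phi_{S}(2\cdot\mathrm{Card}(D)) = \min\{r\in\mathbb{N} : \mathrm{Card}(B(e,r)) > 2\cdot\mathrm{Card}(D)\}$, which is precisely the minimal $d\in\mathbb{N}$ with $\mathrm{Card}(B(e,d)) > 2\cdot\mathrm{Card}(D)$. First I would note that such a $d$ exists: since $\mathrm{Card}(D) < \mathrm{Card}(\Gamma)/2$ we have $\mathrm{Card}(\Gamma) > 2\cdot\mathrm{Card}(D)$, and because $\Gamma$ is non trivial with $S=S^{-1}$ the balls $B(e,r)$ either grow without bound (if $\Gamma$ is infinite) or eventually equal $\Gamma$ (if $\Gamma$ is finite), so some ball exceeds $2\cdot\mathrm{Card}(D)$ in cardinality. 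Set $d := \phi_{S}(2\cdot\mathrm{Card}(D))$, which is then the minimal integer with $\mathrm{Card}(B(e,d)) > 2\cdot\mathrm{Card}(D)$.

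Next I would apply the Half mass transportation Proposition with this particular $d$ (its hypotheses — $\Gamma$ non trivial, $D$ non empty, $\mathrm{Card}(D)<\mathrm{Card}(\Gamma)/2$ — are exactly those of the theorem): it produces an element $x\in B(e,d)$ with $\mathrm{Card}(xD\setminus D) > \frac{1}{2}\cdot\mathrm{Card}(D)$. Since $x\in B(e,d)$ means $||x||_{S}\le d$, I can then invoke the Proposition bounding $\mathrm{Card}(\gamma_{0}D\setminus D)$ with $\gamma_{0}=x$, which gives $\mathrm{Card}(xD\setminus D)\le d\cdot\mathrm{Card}(\partial D)$, the boundary $\partial D=\{\gamma\in\Gamma : dist(\gamma,D)=1\}$ being exactly the one appearing in the statement.

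Chaining these two inequalities yields $d\cdot\mathrm{Card}(\partial D)\ge\mathrm{Card}(xD\setminus D) > \frac{1}{2}\cdot\mathrm{Card}(D)$, hence
\[
\frac{\mathrm{Card}(\partial D)}{\mathrm{Card}(D)} > \frac{1}{2d} = \frac{1}{2\cdot\phi_{S}(2\cdot\mathrm{Card}(D))},
\]
which is the claimed strict inequality; the strictness is inherited verbatim from the strict bound in the Half mass transportation Proposition.

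I do not expect any genuine obstacle at this stage: all the substance already lies in the two propositions established above — the mass transport identity for $\sum_{y\in D}|\varphi(y)-\varphi_{d}(y)|$ together with the probabilistic averaging argument on the uniform measure on $B(e,d)$ for the first, and the preimage-counting lemma $\mathrm{Card}(f^{-1}(z))\le d$ for the second. The only point requiring care is the bookkeeping that the single integer $d$ plays three roles simultaneously: it is the argument-value inside $\phi_{S}$, it is the radius for which the half mass bound $\mathrm{Card}(xD\setminus D) > \frac{1}{2}\mathrm{Card}(D)$ holds, and it is the word-length ceiling $||x||_{S}\le d$ that makes $\mathrm{Card}(xD\setminus D)\le d\cdot\mathrm{Card}(\partial D)$ applicable. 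Once that identification is in place the theorem follows in one line.
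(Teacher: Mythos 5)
Your proposal is correct and follows exactly the paper's own argument: identify $d=\phi_{S}(2\cdot\mathrm{Card}(D))$ as the minimal radius with $\mathrm{Card}(B(e,d))>2\cdot\mathrm{Card}(D)$, apply the Half mass transportation Proposition to produce $x\in B(e,d)$ with $\mathrm{Card}(xD\setminus D)>\frac{1}{2}\mathrm{Card}(D)$, and combine it with the bound $\mathrm{Card}(xD\setminus D)\le d\cdot\mathrm{Card}(\partial D)$. Your added remarks on the existence of $d$ and on where the strictness comes from are correct and slightly more explicit than the paper's one-line deduction.
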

\begin{proof}

By definition  $\phi_{S}(v)=min\{r: \mathrm{Card}(B(e,r))>v\}$.

We choose  $d$  minimum such that   $\mathrm{Card}(B(e,d)) > 2\cdot \mathrm{Card}(D)$, which means $\phi_{S}(2 \cdot \mathrm{Card}(D))=d $.

Furthermore, applying Proposition 3.1 and Proposition (Half mass transportation), we obtain: $\: \: d \cdot \mathrm{Card}( \partial D) > \frac{\mathrm{Card}(D)}{2}$ therefore

\[
\frac{\mathrm{Card}( \partial D)}{\mathrm{Card}(D)} > \frac{1}{2 \cdot d} = \frac{1}{2\cdot \phi_{S}(2\cdot \mathrm{Card}(D))}.
\]

This finishes the  proof of the theorem.
\end{proof}

\bibliography{mabiblio}{}
\bibliographystyle{plain}

\end{document}